\documentclass[final]{aupl}

\newcommand{\labbel}[1]{\label{#1} [[{\bf #1}]]}  
\newcommand{\bibbitem}[1]{\bibitem{#1} [[{\bf #1}]]}  
\renewcommand{\labbel}{\label} \renewcommand{\bibbitem}{\bibitem}

\usepackage{
amsfonts,
latexsym,
amssymb,
amsmath,
amsthm,
color,
enumerate,
verbatim}

\usepackage[matrix,arrow]{xy}

\numberwithin{equation}{section}

\newtheorem{theorem}{Theorem}[section]
\newtheorem{lemma}[theorem]{Lemma}

\newtheorem*{claim*}{Claim}

\newtheorem*{theorem*}{Theorem}
\newtheorem*{corollary*}{Corollary}

\theoremstyle{definition}
\newtheorem{definition}[theorem]{Definition}

\theoremstyle{remark}
\newtheorem{remark}[theorem]{Remark}

\begin{document}

\title{One-sided inverses in noncommutative infinitary semigroups}

\author{Paolo Lipparini} 
\address{
Dipartimento di Matematica. Viale della Ricerca Scientifica da un lato,
II Universit\`a di Roma (Tor Vergata), I-00133 ROME, ITALY (currently retired)
\\ORCiD 0000-0003-3747-6611}
\email{lipparin@axp.mat.uniroma2.it}

\thanks{Work performed under the auspices of G.N.S.A.G.A. Work 
 supported by PRIN 2012 ``Logica, Modelli e Insiemi''.
The author acknowledges the MIUR Department Project awarded to the
Department of Mathematics, University of Rome Tor Vergata, CUP
E83C18000100006.}

\begin{abstract}
In a former paper we introduced \emph{partial infinitary 
noncommutative semigroups} and showed, among other,
that significant
differences arise in comparison with the 
 commutative case, previously studied in the literature. For example,
in the commutative case we cannot have an infinitary identity $e$ 
together with two elements $a \not= e$, $b \not= e$ such that $ab= e$,
just under the assumption that the countable product $abababa\dots$ is defined.
Here we show that this is possible in the noncommutative case, actually, 
we can have an
 infinitary semigroup on a countable set with
a complete identity and such that the operation
is  defined for every indexed linearly ordered set.  
\end{abstract}

\keywords{Partial infinitary semigroup, partial infinitary operation, 
noncommutative semigroup, generalized associative laws,
one-sided inverse, sequence indexed by an ordered set} 

\subjclass[2010]
{Primary 20M75, secondary  08A65}

\maketitle

\section{Introduction} \labbel{intr} 

In \cite{smgrnc}  we introduced and studied the notion of a
``noncommutative''  
 \emph{infinitary 
semigroup}. We have dealt with the case in which the infinitary
operation is partial and also with the case when  the infinitary 
semigroup is \emph{complete}, namely, every product indexed
by a linearly ordered sequence is defined. 
The definitions generalize and encompass many 
previous notions introduced by various authors, including 
A. Tarski \cite{Tord}, C. Karp \cite{Karp}, 
J. H. Conway \cite{C}, D. Krob \cite{K},
N. Bedon  
 and C. Rispal \cite{BR},
among many others. See for example 
 \cite{HW}, \cite{PP} for a survey.

An easy argument from \cite[Proposition 5.1]{smgrnc}  shows that
we cannot have an infinitary identity 
$e$ together with two elements $a$, $b$   
such that $ba=ab=e$, apart from the trivial case
$a=b=e$. 
There we stated that, instead, we can have a 
complete infinitary semigroup with a complete identity
if we remove the request $ba=e$, namely, we just require
the existence of a one-sided inverse.
Since the proof turned out to be unexpectedly long
and complicated,
we decided to present it in a separate note.
Here it is.
This shows that the noncommutative theory
turns out to be much richer than  the commutative one,
though sometimes also much more difficult.

Our main result here is the following theorem 
(undefined notions will be explained in the next section).

\begin{theorem} \labbel{abe}  
There is  a  complete semigroup $(S, \prod)$ 
such that $S$ is a countable set and there are
elements $a,b, e \in S$  such that 
$ a  \neq e$, $ b \neq e$,
$ab=e$ and
 $e$ is a complete identity. 
\end{theorem}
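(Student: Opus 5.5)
The plan is to take an infinitary version of the bicyclic monoid. Let $S=\{\,b^m a^n : m,n\in\omega\cup\{\omega\}\,\}$. This is a countable set, and $e=b^0a^0$, $b=b^1a^0$, $a=b^0a^1$. Each element is regarded as a ``formal word'': a block of $m$ letters $b$ followed by a block of $n$ letters $a$. Given a family $(s_i)_{i\in I}$ indexed by a linearly ordered set $I$, I concatenate the words $s_i$ along $I$. This gives a single generalized word $w$ over $\{a,b\}$, indexed by the ordered sum of the letter positions. The product $\prod_{i\in I}s_i$ is defined as the reduced form $b^{m(w)}a^{n(w)}$ of $w$, where each $a$ cancels against a \emph{later} $b$. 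This matches the finite rule $ab=e$, $ba\neq e$.

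The main choice is how to compute $m(w)$ and $n(w)$ by cut-based formulas that do not depend on the bracketing. In the finite case, $m(w)$ is the maximum over initial segments $P$ of $\#_b(P)-\#_a(P)$ (counting $P=\emptyset$), and symmetrically $n(w)$ is the maximum over final segments $Q$ of $\#_a(Q)-\#_b(Q)$. In general I would count $m(w)$ as the number of letters $b$ that are unmatched: a $b$ at position $p$ is unmatched if, for every finite convex set ending at $p$, the letters $b$ outweigh the letters $a$. Unmatched letters $a$ are counted symmetrically, using finite convex sets starting at $p$. Both counts are capped at $\omega$. I would first check the test cases that force the definition. For $(ab)^\omega$ and $a(ba)^\omega$ the value must be $e$. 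For $(ba)^\omega$ it must be $b$, so that $a\cdot(ba)^\omega=e$ is consistent. For $\omega^*$-indexed families the analogous values must come out symmetrically. If the naive finite-window rule breaks one of these cases, I would refine the matching to a greedy matching along the order, and take $m,n$ to be the cardinalities of the unmatched sets.

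The main obstacle is the generalized associative law. For every partition of $I$ into convex pieces $(I_j)_{j\in J}$, ordered by $J$, one needs $\prod_{i\in I}s_i=\prod_{j\in J}\prod_{i\in I_j}s_i$. The strategy is to show that ``the letter at position $p$ is unmatched'' is a local property that survives partial reduction. First, replacing a convex block by its reduced form deletes only matched pairs. Second, it neither creates nor destroys unmatched letters outside the block. Third, an unmatched letter inside the block stays unmatched in the whole word exactly when it did before. This amounts to a case analysis on finite convex windows crossing block boundaries, with extra care where a block is reduced to a word containing $\omega$ many letters. This is the step I expect to be long, and the place where the definition may need adjusting. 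Finally, $e$ is a complete identity because inserting or deleting any family of copies of the empty word $e$ does not change the letter word $w$ at all. Also $ab=e$, while $a\neq e\neq b$. This gives Theorem~\ref{abe}.
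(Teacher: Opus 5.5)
There is a genuine gap. The product is never actually pinned down, and the rule you do state violates (N), so your locality strategy for (N) fails as stated.

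With finite-window matching and counts capped at $\omega$, both $\prod_{\omega}a$ and $\prod_{\omega^*}a$ evaluate to $b^0a^\omega$. Computing directly on letter words, however, you get two different values:
\begin{itemize}
\item $\prod_{\omega+1}(aaa\cdots b)=b^1a^\omega$, because no finite convex set ending at that $b$ contains an $a$;
\item $\prod_{\omega^*+1}(\cdots aaa\,b)=b^0a^\omega$, because the $b$ cancels the last $a$.
\end{itemize}
By (N) both must equal $(b^0a^\omega)\cdot b$, whatever order type you give the formal word $b^0a^\omega$.

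Likewise your rule gives $e$ for the $\omega^*+\omega$ word $\cdots aaa\,bbb\cdots$. Since it also identifies $\prod_\omega b$ with $x'=\prod_{\omega^*}b$, (N) would make $\prod_{\omega^*}a$ a left inverse of $x'$. But $x'b=x'$, so $wx'=e$ would give $b=wx'b=wx'=e$.

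The underlying problem is that an element with an infinite exponent forgets the order type of the block it came from, while any local matching rule depends on it. Since $\prod_{\omega^*}a\cdot b=\prod_{\omega^*}a$ is forced by $ab=e$ and (Id), your identification $\prod_\omega a=\prod_{\omega^*}a$ forces the final $b$ in $aaa\cdots b$ to cancel even though no $a$ immediately precedes it. So ``reducing a block creates or destroys no unmatched letters outside it'' is false. At the same time, $aaa\cdots bbb\cdots\neq e$, because $\prod_\omega a$ has no right inverse: from $a\prod_\omega a=\prod_\omega a$ one gets $a=e$. So a refined matching can neither stay local nor cancel greedily. Your fallback, a ``greedy matching along the order'', is not defined for general (e.g.\ dense) index sets, and you give no verification of (N) for it.

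The paper avoids exactly these problems with two ideas absent from your plan.
\begin{itemize}
\item Every product whose reduction is not a finite word $b^na^m$ is sent to a single absorbing element $\Omega$. No information about infinite remainders ever has to be tracked, which keeps $S$ countable (cf.\ Remark~\ref{diffic}).
\item Full cancellation is captured not by finite windows but by the inductively defined pseudonull sequences: disjoint convex unions of pseudonull pieces, and wrapping a pseudonull sequence as $a\ldots b$. Under this definition $\cdots aaaaabbbbb\cdots$ is not pseudonull, although it is covered by overlapping pseudonull pieces, which is precisely what your finite windows detect.
\end{itemize}
Lemma~\ref{cl} then supplies what (N) needs:
\begin{itemize}
\item (c): convex subsets of regular sequences are regular;
\item (f): the regular reduction is unique;
\item (h): regularity and the reduced value are invariant under partial reductions.
\end{itemize}
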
  

The proof of Theorem \ref{abe} will be given
in Section \ref{proofabe}.

\section{Preliminaries} \labbel{prel} 

\begin{definition} \labbel{pnc}
 An  \emph{infinitary semigroup}
 is a nonempty class $S$ 
together with a (partial) class function $ \prod$
whose codomain is $S$ itself and
whose domain
 consists of a class of linearly-indexed sequences
 of elements of $S$.
The image of $(a_i) _{i \in I} $  under $\prod$, when defined,
will be denoted by $ \prod _{i \in I} a_i $.
An infinitary 
semigroup is required to satisfy the following two properties.
  \begin{enumerate}
\item[(U)]
\emph{If $I= \{ i  \} $ has one element,
then 
$\prod _{i \in I} a_i $ is defined 
and is equal to $a_i$.}
   \item[(N)]
\emph{Whenever $\prod _{i \in I} a_i $ is defined
and $\pi:I \to J$ is a surjective order preserving map,
then all products in the following equation are defined
\begin{equation*}\labbel{N}
\prod _{i \in I} a_i = \prod _{j\in J} \prod _{\pi(i) =j }  a_i
  \end{equation*}     
and equality actually holds.}
   \end{enumerate} 

In the above-displayed formula 
the meaning of $ \prod _{\pi(i) =j }  a_i$
is $ \prod _{i \in I_j }  a_i$, where
$I_j  = \{ i \in I \mid \pi(i) =j \}  $ is given the (sub-)order 
induced by the order on $I$. Note that, since $\pi$ is  order preserving, 
$I_j$ is a \emph{convex} subset of $I$, 
that is, if $a <b \in I_j$ and $a< c<b$ holds in $I$, then
$c \in I_j$. 

If $a_i=a$, for every $i,j \in I$,
we will simply write   $ \prod _{I} a $,
and $\prod _ \emptyset $ in the case of the empty sequence.
As usual, when there is no risk of ambiguity, we will  write
$S$ in place of $(S, \prod)$.

A \emph{complete  semigroup} (a \emph{${\leq} \omega$-semigroup})
 is an infinitary semigroup such that  
$\prod _{i \in I} a_i $ is defined for every nonempty  linearly-indexed 
sequence
(respectively, nonempty well-ordered sequence  of type $\leq \omega $) 
of elements of $S$.
\end{definition}

It is immediate from  (N) and (U) that 
every partial infinitary semigroup satisfies the following property.
  \begin{enumerate} 
\item[(Iso)]
\emph{If 
$I$ and $J$ are isomorphic ordered set,
$f:I \to J$ is an order isomorphism
and $b _{f(i)}  = a_i$, for every $i \in I$, 
then 
$\prod _{i \in I} a_i $ is defined 
if and only if 
$\prod _{j \in J} b_j $ is defined and, in case they are defined,
they are equal.} 
  \end{enumerate}

In particular, if $I= \{ i_1, i_2\dots i_n \} $
 is finite, we are free to write,  without ambiguity,
$a_{i_1}a_{i_2}\dots a_{i_n} $ 
in place of  $\prod _{i \in I} a_i $.
We will use a similar notation also for longer sequences,
say, $a_{1}a_{2} a_{3}\dots $ for $\prod _{i \in \mathbb N} a_i $.

\begin{definition} \labbel{complid}    
If $S$ is an infinitary semigroup, 
an element $e $ of $  S$ is a \emph{complete identity}  
if it satisfies the following condition.
  \begin{enumerate}
   \item[(Id)]
\emph{If $\prod _{i \in I} a_i $ is defined
and  the (not necessarily convex) subset $H = \{ i \in I \mid a_i \not= e \} $
of $I$ has the induced order, 
then $\prod _{i \in H} a_i $ is defined, too, and
$\prod _{i \in I} a_i = \prod _{i \in H} a_i $.}
  \end{enumerate}

In particular, we get from  (U) that if $e$ is a complete identity, then
 $\prod _{\emptyset } $ is defined and equal to $e $.
Thus if some complete identity exists,  it is unique.
\end{definition}

\section{Sketch of a simpler example} \labbel{XyX}

We first exemplify our proof by sketching the construction of a
$\leq \omega $-semigroup $S$ with  elements $a,b,e$
satisfying the conclusions of Theorem \ref{abe}. 
The argument deals with strings; here juxtaposition denotes
string concatenation; in case of ambiguity we will always
specify whether we are dealing with string concatenation or
with a $\prod$-product in some infinitary semigroup. 

First, if $\sigma$ is either a finite string, or an $ \omega$-indexed string
and $\sigma$ 
contains only the symbols $a$ and $b$,
let the \emph{reduced form}  of $\sigma$ be
obtained from $\sigma$ by computing 
$\sigma$  ``as if we were in a semigroup in the 
ordinary sense'', with the clause $ab=e$, $e$ being
a neutral element. Namely, we 
iteratively remove all adjacent  pairs of the form $ab$.
This might involve removing infinitely
many elements in an infinite sequence; for example, if  
$\sigma=bba
\overline{a \overline{ab} b} \hspace{1.5 pt}
\overline{a\overline{ab}\hspace{1.5 pt}\overline{ab}
\hspace{1.5 pt}\overline{ab}b} \overline{ab}
 aa 
 \overline{ab}  \hspace{1.5 pt}
\overline{a \overline{ab} b} \hspace{1.5 pt}
\overline{a \overline{a \overline{ab} b} b} \hspace{1.5 pt}
 \dots$,
then the reduced form of $\sigma$ is $bbaaa$. 
The reduced form of a string might be infinite,
for example $bbbbbaaaaaaa\dots$ is already in reduced form. 
Of course, no occurrence of the substring $ab$
can appear in the reduced
form of $\sigma$, but it does not seem to be
immediate that the reduction is unique. 
See Appendix I.

Now we construct our first example which is 
a $\leq \omega $-semigroup.
The domain
$S$ is the set of all the finite strings of the form 
$b^na^m$, with $n,m \geq 0$, plus an additional element $ \Omega$.  
We denote the empty string, that is, the case $n=m=0$ by $e$. 
If $(\sigma_i) _{i \in I } $ is a sequence of elements of $S$
with either $I$ finite or $I $ of order-type $\omega $,
let us compute $\prod _{i \in I}  \sigma _i $ in the following way.

If some $\sigma_i$ is $ \Omega$, we let  $\prod _{i \in I}  \sigma _i  = \Omega $.

Otherwise, consider the concatenation
$ \sigma  = \ast _{ i \in I }  \sigma _i$ of the strings $\sigma_i$,
in their order. 
If the reduced form of $\sigma$   has the form $b^na^m$, with $ n,m < \omega$,  let
$\prod _{i \in I}  \sigma _i  = b^na^m$; otherwise, let 
$\prod _{i \in I}  \sigma _i  = \Omega $.
The above definition includes the case $\prod_ \emptyset = e
=b^0a^0$. 

Again, it is intuitive that $S$ is
a $\leq \omega $-semigroup, but a full proof
seems to need a few tricky details. 
See Appendix I for a full proof.

\begin{remark} \labbel{diffic}   
Before explicitly treating  the general case of a complete
semigroup in Theorem \ref{abe},
we briefly discuss the difficulties which arise.
The general case is more involved than the arguments in 
the previous example, since if in some
infinitary semigroup $S$ 
we naively set some $\mathbb Z$-indexed
product  $ \dotsc aaaaabbbbb \dotsc $ to be equal to $  e$,
then by (N)  both  $ \dotsc aaaaa $
and $ bbbbb \dotsc $ should be defined in $S$ and not absorbing.
We expect that this situation is admissible
(though we have not checked the details yet), but then, going on,
we have that $ bbbbb \dotsc  \ b$,
$ bbbbb \dotsc  \ bb$ and
$ bbbbb \dotsc  \ bbbbb \dotsc$ should be defined as well
and, iterating transfinitely the construction, we possibly get a semigroup
with domain as large as a proper class. 
Hence, if we want  $ \vert  S  \vert   = \omega $,
it is surely simpler to assume that 
 $ \dotsc aaaaabbbbb \dotsc \neq e$.
The issue is solved by dealing with  some ``(possibly partial) reductions'', 
instead of   dealing with a (fully) reduced form.
We will present full details in Definitions \ref{full} - \ref{fullred}. 

In any case, an $\mathbb N$-indexed product of the form
$aaaaa\dots$ cannot have a right inverse,
unless $a=e$; see \cite[Proposition 6.7]{smgrnc}. 
\end{remark}

\section{Pseudonull sequences} \labbel{pnsec}   

We are now going to introduce the class of pseudonull
sequences, which are a broad generalization of
those finite strings which reduce to $e$
in a finitary semigroup, under the assumption that $ab=e$.
The ``right'' definition is far from being 
straightforward, due to the difficulties hinted in     
Remark \ref{diffic}.

\begin{definition} \labbel{full}    
For arbitrary linearly ordered sets, we consider
$I$-indexed sequences
whose elements, again,  are only  $a $ and $ b$.

The class of \emph{pseudonull} sequences is the smallest class containing
  \begin{enumerate}[(i)]        
\item 
the empty sequence, and
\item
any sequence $( s_i) _{i  \in I} $ such that 
$I$ can be partitioned into convex subsets
$I_j$ ($j \in J$) in such a way that, for every $j \in J$,
the subsequence  $( s_i) _{i  \in I_j} $ is pseudonull, and
\item
any sequence $( s_i) _{i  \in I} $ such that $I$ 
has a minimum $i_1$, a maximum $i_2$, 
$s _{i_1}=a $, $s _{i_2}=b $ and
$( s_i) _{i  \in I \smallsetminus \{ \,  i_1, i_2 \, \} } $ is 
a pseudonull sequence. 
  \end{enumerate}    

See Definition \ref{full2} below for a more formal definition.
 \end{definition}   

In particular, by (i) and (iii),
if $I= \{ \,  i_1, i_2 \, \} $ and $s _{i_1}=a $,
$s _{i_2}=b $, then the
sequence $( s_i) _{i  \in I} $ is pseudonull.
In what follows, 
we will denote the above sequence by $ab$,
namely we will consider the sequence as a string and we will
work up to reindexing.  We will use a similar convention
for countable sequences. In fact,
 linearly ordered sequences 
are a generalization of strings and some authors  plainly call
them strings. In this and the next section,
juxtaposition always denotes string concatenation.

For example, the sequences $ababab\dotsc$  ($\mathbb N$-ordered) and
$\dotsc ababab\dotsc$ ($\mathbb Z$-ordered),
hence also $ (aaababab \allowbreak \dotsc)bb$ and  
$aa(\dotsc ababab \dotsc)bb$ are pseudonull.
 On the other hand, $\dotsc aaaaabbbbb \dotsc$,
is not pseudonull, essentially since in (ii)  we need to deal with
a union of \emph{disjoint} convex suborders. 
See below for more explicit details.
By the way, note that $\dotsc aaaaabbbbb \allowbreak \dotsc$
is a ``union''   of (not disjoint) pseudonull sequences.

In order to carry over appropriate
transfinite inductions, we need an evaluation of the ``complexity''
of pseudonull sequences. By the way,
we get a more formal definition of a pseudonull sequence.

\begin{definition} \labbel{full2}
We  define the class of \emph{pseudonull sequences
of degree $ \leq \alpha$}, for $\alpha$ an ordinal,
by transfinite induction. The empty sequence,
 given by (i) in Definition \ref{full},
is of degree $ \leq 0$. 
 A sequence given by (ii)
is pseudonull of degree $\leq \alpha$,
where $\alpha$ is the smallest ordinal strictly larger than all  the degrees
of the involved subsequences.
A sequence given by (iii)
is pseudonull of degree $\leq \alpha +1$, provided  
$( s_i) _{i  \in I \setminus \{ \,  i_1, i_2 \, \} } $ 
is pseudonull of degree $\leq \alpha $.

In particular, a sequence $\sigma$  is pseudonull if and only if
 there is some ordinal
$\alpha$ such that $\sigma$  is pseudonull of degree $\leq \alpha $,
hence there is the least ordinal $\bar \alpha$, the \emph{degree of}
$\sigma$,  such that $\sigma$ 
is  pseudonull of degree $\leq \bar \alpha $. 
 \end{definition}   

With the above more formal definition, we now give the exact details for 
the comment  asserting that   the sequence 
$\dotsc aaaaabbbbb \dotsc$
is not pseudonull.
By construction, (i) and (iii) cannot witness that the sequence 
$\dotsc aaaaabbbbb \dotsc$
is pseudonull, so that, were 
$\dotsc aaaaabbbbb \dotsc$  pseudonull, this should be
witnessed by (ii). But this is clearly impossible, since it would
entail that some sequence whose only elements are $a$, or $b$,
is pseudonull, which is not the case, since each nonempty pseudonull sequence
must contain at least both an occurrence of $a$ and an occurrence of $b$
(formally, this is proved by
induction on $\alpha$; compare the proof of item (a) 
in Lemma \ref{cl}  below).  

\begin{definition} \labbel{fullred}
If  $ \sigma = ( s_i) _{i \in I} $ is a sequence, let us say that 
some (possibly empty)
sequence $ \tau  = ( s_r) _{r \in R} $ is \emph{a reduction of} 
$\sigma$, or \emph{a reduced form of} $\sigma$,
 if $R \subseteq I$ and there is a \emph{reducing family}
 $( K_\ell) _{ \ell \in L} $
of pairwise disjoint convex subsets of $I$ such that, for each $\ell \in L$,   
the sequence $( s_i) _{i \in K_\ell}$ is pseudonull,
and, moreover, $R= I \setminus \bigcup_{ \ell \in L}  K_\ell $.
Note that we do not assume that $R$ is convex, though
each $ K_\ell$ is required to be convex. 
For short, $\tau$ is a reduction of $\sigma$ if $\tau$
is obtained from 
$\sigma$ by removing a family of pairwise disjoint pseudonull 
convex
subsequences. As above, the sequence 
$\dotsc aaaaabbbbb \dotsc$ witnesses that
(contrary to the example presented in Section \ref{XyX})
 we do not always necessarily 
have a canonical ``total''  reduction.  

A sequence $\sigma$ is \emph{regular}
if it has a \emph{(regularizing) reduction} $\tau$ of the form  
  $b^na^m $, with $n,m \geq 0$.
In the  case of a regular sequence, the reduction 
providing   $b^na^m $ is ``total''
(this is intuitively clear, but a formal proof is quite tricky, see
items (f) and (h) in Lemma \ref{cl}  below).

Note that in the above definition of regular
we include the  possibility that $n=m=0$,
that is, $\tau$ can be the empty sequence.
In particular,
a pseudonull sequence is regular. On the other hand,
by considering clause (ii) in Definition \ref{full}, a 
 sequence with an empty  reduction  is pseudonull.
Thus a 
sequence is pseudonull 
if and only if it has a  reduction of the form $n=m=0$.
 \end{definition}   

For notational simplicity, it is convenient to refer
to pseudonull, regular, etc., indexes, rather than sequences.

\begin{definition} \labbel{fullsubs}
If some fixed sequence $ ( s_i) _{i \in I} $ is intended 
 and $H \subseteq I$,
we will say that $H$ is \emph{pseudonull} if 
the subsequence $ ( s_i) _{i \in H} $ is pseudonull and
we will write ``the degree of $H$'' 
for  ``the degree of $ ( s_i) _{i \in H} $''. Similarly, 
we say that $H$ 
is \emph{regular} if   $ ( s_i) _{i \in H} $  is regular. If this is the case,
we will also say that $H$ \emph{has a reduction
of the form} (or that $H$ \emph{reduces} to)  
  $b^na^m $ if  $ ( s_i) _{i \in H} $ has a reduction of the form
 $b^na^m $. 
\end{definition}

\section{The main lemma} \labbel{mlem} 

\begin{lemma}\labbel{cl} 
Let us fix some sequence $ \sigma = ( s_i) _{i \in I} $
in which only the elements $a$ and $b$ occur.
Recall the convention in Definition \ref{fullsubs}.

(a)  Suppose that $ \sigma $ is  pseudonull,
 $P$ is an initial suborder of $I$ 
and $Q$ is the complementary final suborder.
Then there is $n \geq 0 $
such that  
$ P$ has a reduction of the form $a^n$ 
and $Q$ has a reduction of the form $b^n$.
 
(a1) In particular, if $\sigma$ is pseudonull and $I$ has a minimal
 element $i_1$ (a maximal element $i_2$), then 
$s_{i_1}=a$ (respectively, $s_{i_2}=b$).

(b)   If $\sigma$ is regular, then there are a finite reducing family
$( K_\ell) _{ \ell \leq r+1} $   and finite convex subsets 
$( R_\ell) _{ \ell \leq r} $ of $I$ such that 
$I$ is the disjoint union of $K_0$, $R_0$, $K_1$, $R_1$, \dots,
$K_{r+1}$, ordered in that way,  $R_0$ is the set of indexes
for a sequence of the form $b^{n_0}$,
$R_1$ is the set of indexes
for a sequence of the form $b^{n_1}$, \dots,
until we get some $R_s$ with an associated sequence  of the form   
$b^{n_s}a^{m_s}$, with all the ensuing 
$ R_\ell$ having associated sequences of the form
$a^{m_\ell}$. (We have described the most general form:
some ingredients might be missing, the ordered set  
$I$ might start directly with  $R_0$, that is, 
$K_0$ might be empty; moreover, 
the $a^{m_\ell}s$ might be missing, when the reduction has the form $b^n$ or,
conversely,  the $b^{m_\ell}s$ might be missing, when
 the reduction has the form $a^m$; in the extreme case 
when $\sigma$ is pseudonull, the above representation reduces
to a one-element reducing family consisting of $I$ alone.)

(c)  If $ \sigma = ( s_i) _{i \in I} $ is  regular 
and $H$ is a convex subset of $I$, then $H$ is regular, namely,
$ ( s_i) _{i \in H} $ is a regular  sequence.

(d) If $H, K \subseteq I$ are both pseudonull and convex,
then   $H \cap K$ is pseudonull (and, obviously, convex, possibly,
empty).

(e) Under the same assumptions as in (d), both 
$H \setminus K$ and $K \setminus H$ are pseudonull. 

(f) If $ \sigma  $ is a regular  sequence,
then the $n$ and $m$ in any reduction  
$b^na^m$ of $\sigma$ are uniquely determined.

(g1) If $\sigma$ is pseudonull, then every reduction of $\sigma$
is pseudonull.

(g2) If $\sigma$ has some pseudonull reduction, then $\sigma$ is
pseudonull.

(h) If $\sigma^*$
is a (not necessarily regularizing) reduction of $\sigma$,
then $\sigma$ is regular if and only if   $\sigma^*$ is regular and, 
if this is the case, the regular
reductions of $\sigma$ and $\sigma^*$ (unique by (f))
 are equal.
\end{lemma}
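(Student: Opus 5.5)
The natural tool is transfinite induction on the degree of Definition \ref{full2}, and I would prove the items in the order (a), (a1), then a uniqueness claim, then (d), (e), (f), (b), (c), (g1), (g2), (h). For (a), assume $\sigma$ is pseudonull of degree $\leq\alpha$ and fix a cut $(P,Q)$.

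\emph{Case (ii).} $I$ is partitioned into convex pseudonull blocks $I_j$. At most one block $I_{j_0}$ meets both $P$ and $Q$. All other blocks lie entirely in $P$ or in $Q$, so they can be put into the reducing families of $P$ and $Q$. By the inductive hypothesis, $I_{j_0}\cap P$ reduces to $a^n$ and $I_{j_0}\cap Q$ reduces to $b^n$. If no block is split, take $n=0$.

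\emph{Case (iii).} $I=\{i_1\}\cup I'\cup\{i_2\}$, with $s_{i_1}=a$, $s_{i_2}=b$ and $I'$ pseudonull. If the cut lies inside $I'$, apply induction to $I'$ and add $i_1$ to $P$ and $i_2$ to $Q$, giving $a^{n+1}$ and $b^{n+1}$. The degenerate cuts ($P$ or $Q$ empty, or equal to $\{i_1\}$ or $\{i_2\}$) are direct.

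Item (a1) follows by taking $P=\{i_1\}$, or $Q=\{i_2\}$. Here a single letter is trivially a reduction of itself, and it is non-removable, since a one-element pseudonull set is impossible: every nonempty pseudonull set contains both an $a$ and a $b$.

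The heart of the lemma, and the step I expect to be the main obstacle, is a uniqueness claim.

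\begin{claim*}
No sequence has both a reduction $b^n$ and a reduction $a^m$ unless $n=m=0$. More generally, two reductions of the forms $b^na^m$ and $b^{n'}a^{m'}$ have $n=n'$ and $m=m'$.
\end{claim*}

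I would prove this by induction on the supremum of the degrees of the blocks in the two reducing families. Take an index $r$ surviving in the first reduction. If $r$ also survives in the second, remove it and compare what remains on each side of it. Otherwise $r$ lies in some pseudonull block $K$ of the second family. Cut $K$ at $r$ and use (a) to rewrite the two parts of $K$ as $a^k$ and $b^k$. This lets the two families be compared block by block on smaller degrees.

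The delicate point is that the blocks of the first family which straddle the boundary of $K$ must themselves be cut, again by (a). This produces partial reductions of the same shape, so the induction measure has to be chosen so that it genuinely drops. I would try a lexicographic measure: the degree of the block containing $r$, then the number of surviving letters.

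With the claim in hand, (d) follows. If $H$ and $K$ are nested, (d) is trivial; otherwise $H\cap K$ is a final segment of $H$ and an initial segment of $K$. By (a) it reduces to some $b^n$ (from the cut of $H$) and to some $a^m$ (from the cut of $K$). The claim gives $n=m=0$, so $H\cap K$ has an empty reduction and is therefore pseudonull by clause (ii). Item (e) follows in the same way, since by (a) $H\setminus K$ reduces to $a^n$ with $n=0$. Item (f) is the claim itself.

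For (b), observe that the surviving set $R=I\setminus\bigcup K_\ell$ is finite and contains only finitely many indices. So the maximal convex pieces of $\bigcup K_\ell$ lying between consecutive survivors are unions of convex pseudonull blocks, and each such piece is pseudonull by clause (ii). This gives the finite representation $K_0,R_0,\dots,K_{r+1}$.

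Item (c) follows from (b). A convex $H$ cuts at most two of the $K_\ell$, and only at its endpoints, and (a) turns those two cut pieces into $b^{p}\dots$ on the left and $\dots a^{q}$ on the right. These merge with the intermediate $R_\ell$, which are in the correct $b$-then-$a$ order.

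For (g1), (g2) and (h), I would merge reducing families. Given a reducing family for $\sigma$ and one for a reduction $\sigma^*$ of $\sigma$, replace overlapping blocks by unions; these unions are pseudonull by (d), (e) and clause (ii). In the other direction, refine one family by intersecting it with the blocks of the other, using (d) and (e). This shows that reductions compose and can be decomposed. Finally, (f) gives equality of the regular reductions in (h).
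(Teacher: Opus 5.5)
Your (a), (a1) and (b) match the paper. The gap is the uniqueness Claim, on which your (d), (e) and (f) all rest. It is only sketched, and the sketched induction does not close.

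Suppose a surviving index $r$ of the first family lies in a block $K$ of the second. Replacing $K$ by the (a)-reductions of its two pieces inserts $a^kb^k$ into the second reduction, giving e.g.\ $b^{n'}a^kb^ka^{m'}$. Cutting a first-family block that straddles an end of $K$ likewise inserts some $a^pb^p$. The resulting reductions have an $a$ before a $b$, so they are outside the scope of the Claim. For such reductions uniqueness is simply false: $ab$ reduces both to itself and to the empty sequence. So the induction, as sketched, has nothing to apply its hypothesis to.

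What the inductive step really needs is that the straddling pieces $J\cap K$, $J\setminus K$, $K\setminus J$ are pseudonull. That is exactly (d) and (e), which you plan to deduce from the Claim, so the argument is circular.

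The paper goes the other way and proves (d) directly, by induction on the natural sum of the degrees of $H$ and $K$. If $H$ comes from clause (ii), intersect $K$ with the unique block of $H$ that $K$ meets without containing it, apply the inductive hypothesis, and reassemble by (ii). If both come from (iii), strip their endpoints, apply the hypothesis to $H'\cap K'$, and restore an $a$ and a $b$ by (iii). Then (e) follows, and (f) comes from (f*). Take a representation as in (b), and suppose a block $I_{\bar\jmath}$ of another reduction contained a surviving letter. Removing the at most two straddled $K$'s by (e) leaves a convex pseudonull set whose first element is $b$ or whose last element is $a$, contradicting (a1).

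Several other steps skip cases that need their own induction:
\begin{itemize}
\item Your (e) only covers the case where $H\setminus K$ is a segment of $H$. When $K$ lies strictly inside $H$, the set $H\setminus K$ is not convex and (a) gives nothing. The paper needs a separate induction on the degree of $H$ here (case (e3)).
\item In (c) you miss the case where $H$ lies strictly inside a single $K_\ell$, so that neither end of $H$ is an end of a block. This is the paper's case (c1), again an induction on degree. Alternatively, first cut $K_\ell$ by (a), then treat $H$ as an initial segment of the resulting regular set.
\item (g2) cannot come from (d), (e) and clause (ii) alone. Already for $\sigma^*=ab$ with a removed block $ab$ between its two letters you need clause (iii). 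In general you must induct on the degree of $\sigma^*$, filling each of its blocks with the removed blocks lying within its span, as the paper does.
\end{itemize}
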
 

\begin{proof} 
(a) The proof  is by transfinite induction on
the degree of  $\sigma$.
The conclusion is straightforward 
if $\sigma$ is the empty sequence, as given by (i)
in Definition \ref{full}.

Let $\sigma$ be given by (ii) in Definition \ref{full}, so that 
$I$ is the union of pairwise disjoint 
pseudonull convex subsets $I_j$ ($j \in J$) of smaller degree,
according to Definition \ref{full2}.
If both $P$ and $Q$ are unions of 
a subfamily of the $I_j$,
then the conclusion holds with $n=0$
(this case includes the straightforward situation in 
which either $P$ or $Q$ is empty, thus, correspondingly,
$Q$ or $P$ equals $I$).
Otherwise, since $P$ and $Q$ partition $I$,
$P$ is an initial segment of $I$,
$\sigma$ is given by (ii)
and the $I_j$ partition $I$ into convex subsets,
 there is 
$\bar{\jmath} \in J$ such that both 
$P \cap I _{\bar{\jmath}} $ and
 $Q \cap I _{\bar{\jmath}} $ are nonempty and 
$P \cap I _{\bar{\jmath}} $ is an initial segment
of $ I _{\bar{\jmath}} $.
\begin{equation*} 
\xymatrix{
\cdots  \ar@{-}`u[r] `[rrrrrr]^{I} [rrrrrr]
 \ar@{-} 
`d[rr] `[rrr]_{P} [rrr]
 \ar@{-}[r]  &
  \cdots  \ar@{-}[r]^{I_h}  &
\cdots   \ar@{-}[rr] ^{I_{\bar \jmath}} &
 \ar@{-} `d[rr] `[rrr]_{Q} [rrr] &
  \cdots  \ar@{-}[r]^{I_k}  &
\cdots   \ar@{-}[r]  & \cdots 
 }
 \end{equation*}     
If this is the case, then
the degree of $ I_{\bar \jmath} $
is $<$ than the degree of $I$.
Hence we can apply  the inductive hypothesis,  
thus there is some $n \geq 0$ such that 
$P \cap I _{\bar{\jmath}} $  
has a reduction of the form $a^n$ and  
 $Q \cap I _{\bar{\jmath}} $
has a reduction of the form $b^n$.
Because of the representation of
$I$ given by (ii), $P \setminus  I _{\bar{\jmath}} $
is either empty, or the disjoint union 
of pseudonull convex sets.
By putting together such sets with the above reduction
of $P \cap I _{\bar{\jmath}} $  , we get a reduction of the form $a^n$
for $P$. Similarly,
$Q$  has a reduction $b^n$. 

Finally, let $\sigma$ be given by (iii) in Definition \ref{full},
thus  $ I^*=  I \setminus \{ \,  i_1, i_2 \, \}  $
is  pseudonull with smaller degree. As in the previous paragraph,
the cases when either $P$ or $Q$ is empty 
is straightforward. If 
 $\vert P \vert = 1$, namely, $P = \{ \,  i_1\, \}$,
since $P$ is initial, then $ Q= I \setminus \{ \,  i_1 \, \} 
=  I^* \cup  \{ \,  i_2 \, \}   $
thus $Q$  has reduction $b$, since 
$I^*$ is pseudonull and $s _{i_2 } =b $,  namely, we get $n=1$.  
 The case when $Q$  has just one element
  is symmetrical.
Otherwise, both $P$ and $Q$ have nonempty intersection with
$I^* $. By the inductive assumption,
and since $I^*$ is pseudonull,
 there is some $n$ such that $P \setminus \{ \, i_1 \, \}$
reduces to $a^n$ and   
$Q \setminus \{ \, i_2 \, \}$
reduces to $b^n$.
Since $s_{i_1}=a$ is the first element of $\sigma$,
the same reduction as above furnishes a reduction  
of the form $a^{n+1}$ for $P$ and,
symmetrically, a reduction  
of the form $b^{n+1}$ for $Q$.
Note for later use that if $\sigma$ is given by (iii),
then $n \geq 1$. 

(a1) is immediate from (a) by taking 
$P = \{ \, i_1 \, \}$, $P= I \setminus \{ \, i_2 \, \}$,
respectively.  

Item (b) is almost immediate using
clause (ii) in the definition of a pseudonull sequence
(and (a) above is not needed).
Indeed, given an arbitrary 
reducing family  $( K^*_\ell) _{ \ell \in L^*} $,
simply group together each set of $ K^*_\ell$s
which are not intertwined with some index 
giving some $a$ or some $b$ in  the reduction.
Since by definition the $K^*_\ell$
are convex, pseudonull and pairwise disjoint, 
 by clause (ii) in the definition of 
a pseudonull sequence, the union of each of the above groups
is still pseudonull.
This provides the desired representation.

(c) We first consider the case when \emph{(c1) $\sigma$ is pseudonull}.
We may also assume that $H$ is neither an initial segment
of $I$, nor a final segment of $I$,  since such cases are covered
by (a). The proof is by induction on the degree of $\sigma$.
As usual, the base case when $\sigma$ is empty is straightforward.

Let $\sigma$ be given by (ii) in Definition \ref{full},
as witnessed by subsets $I_j$ ($j \in J$), which,
 according to  Definition \ref{full2}, have degree strictly less
than the degree of $\sigma$. If $H$ is contained in some
$I_j$, we can thus apply the inductive assumption in order to
get the result. As in the proof of (a), if $H$ is a union
of some members of the family  $I_j$ ($j \in J$),
then $H$ is pseudonull, in particular, $H$ is regular.
Compare Definition \ref{fullred}. 
Otherwise, since $H$ is convex by assumption,
$H$ intersects (and does not contain)
one or two $I_j$, say, $I_{j_1}$ on the left and, 
possibly, $I_{j_2}$ on the right.
\begin{gather*} 
\xymatrix{
\cdots 
 \ar@{-}[r]  &
  \cdots  \ar@{-}[r]   \ar@{-}[rr]_{I_{j_1}}  &
 \ar@{-}`u[r] `[rrrr]^{H} [rrrr] & 
\cdots   \ar@{-}[r]  \ar@{-}@<-6pt>`d[rr] `[rr]_{H^*} [rr] &
  \cdots  \ar@{-}[r]  & \cdots  \ar@{-}[rr] _{I_{j_2}}  
& \ar@{-}[r] &
\cdots   \ar@{-}[r]  & \cdots &
 }
 \end{gather*}     
Since $H$ and $I_{j_1}$ are convex, $H \cap I_{j_1}$ 
is a final subset of $I_{j_1}$, hence by (a) it is regular 
and has a reduction of the form $b^n$.
If $H$ does intersect some $I_{j_2}$ on the right,
then $H \cap I_{j_2}$ is an initial segment of $ I_{j_2}$, hence,
 again by (a),  $H \cap I_{j_2}$ is regular
with a reduction of the form $a^m$.
The subset $H$ might also possibly contain 
a set of $I_j$; this means that
the convex set  $H^*= H \setminus ( I_{j_1} \cup I_{j_2})$
is a union of a family pairwise disjoint pseudonull convex subsets of $I$,
thus, by \ref{full}(ii),  $H^*$ is pseudonull.
If we join  $H^*$ with the reductions for 
$H \cap I_{j_1}$ and for $H \cap I_{j_2}$
(note that the above three sets are pairwise disjoint),
we get a reduction of $H$ of the form  $b^na^m$,
thus $H$ is regular.
 
If $\sigma$ is given by (iii) in Definition \ref{full},
the conclusion is immediate from the inductive hypothesis,
since, as mentioned at the beginning of this proof
of (c1), we may assume that $H$ 
is neither an initial  nor a final segment of $I$, thus $H$ 
is contained in $I \setminus \{ \, i_1, i_2\, \} $,
which is pseudonull of degree smaller than $\sigma$. 

Having proved the special case (c1), let us prove (c)
in general. 
Since $\sigma$ is regular, we may assume to have a
reducing family as in (b). The case when $H$ is a subset of some
$K_ \ell$ is covered by (c1) above, since the
$K_ \ell$ are pseudonull. Otherwise
$H$, being convex,  possibly contains a certain number of  $K_ \ell$
(possibly, none), 
a certain number of
$R_ \ell$ (possibly, none) and possibly intersects at most two of them
without containing them. The argument now is similar to case (c1)(ii) above,
except that some $R_ \ell$ does contribute to the 
reduced form. 

In detail, if the initial segment of $H$ intersects some 
$K_ {\bar \ell}$, we get from (a) that $H \cap K_ {\bar  \ell}$
has a reduction of the form $b^{n}$, since it is a final segment
of the pseudonull set $K_ {\bar  \ell}$. Then $H$ contains $R_ {\bar  \ell}$,
which contributes, say, $b^{n_ {\bar \ell}}$ to the reduced form of
$H$. Then $K_ {\bar  \ell+1}$ is pseudonull, 
$R_ {\bar  \ell+1}$ contributes $b^{n_ {\bar  \ell+1}}$
and so on, until some $K_ \ell$ contributes
$b^{n_ { \ell}}a^{m_ { \ell}}$. Then, symmetrically, we have 
finite sequences of $a$ as contributions. 
Possibly, the final segment of $H$ intersects some
$K_ {\bar {\bar \ell}}$; in this case we get from (a)
 that $H \cap K_ {\bar {\bar \ell}}$
has a reduction of the form $a^{m}$, being an initial segment
of the pseudonull set $K_ {\bar {\bar \ell}}$. Thus we have
first a reduction of $H \cap K_ {\bar  \ell}$; then the above description
constitutes a reduction of $H \setminus ( K_ {\bar  \ell} \cup  K_ {\bar {\bar \ell}})$
and finally we have a reduction of $H \cap K_ {\bar {\bar \ell}}$ on the
right.
If we combine the above reductions, we get a regular reduction of
$H$, since the description in (b) involves only finitely many
subsets, hence the reduction is a finite sequence.

Of course, as in the statement of (b), 
we have described the most general situation;
some of the above contributions might be missing. Also,
the initial/final parts of $H$ might intersect some $R_ \ell$;
in any case, we get contributions of the form $b^{n}$ or $a^{m}$.
The important aspect of the argument is that, in any case,
each occurrence of $b$, if any, comes before any occurrence
of $a$, if any.  

We now turn to item (d). The case is straightforward if either $H$ 
is contained in $K$ or conversely. In particular,
(d) holds if either $H$ or $K$ are empty.
Also the case when $H \cap K = \emptyset $ is straightforward. 
The general case is proved by induction on the natural sum of
the degrees of $H$ and $K$. Recall that the natural sum
 is a strictly monotone operation  on both arguments \cite{bach,sier},
a property which makes it suitable for the induction.

As we mentioned, the case when  $H$ or $K$ is empty
is straightforward. In particular, this includes the base case
of the induction when both $H$ and $K$ are empty.

Suppose that $H$ is pseudonull as given by \ref{full}(ii), namely,
$H$ is the union of pseudonull pairwise disjoint 
convex subsets $I_j$ ($j \in J_H$). 
If $H \cap K$
is a union of a subfamily of such pseudonull disjoint subsets of $H$,
then (ii) applies to $H \cap K$, as well, thus
$H \cap K$ is pseudonull.
Otherwise, since  $H$ 
and $K$ are convex and  $K$ is not contained in $H$,
there is some pseudonull subset $I_{\bar \jmath}$
of $H$ such that  $K \cap I_{\bar \jmath} \neq \emptyset $
but $K$ does not contain $I_{\bar \jmath}$.
By the inductive hypothesis, $K \cap I_{\bar \jmath}$ is pseudonull.
But then  $H \cap K$ is pseudonull, again by applying (ii),
being the disjoint union
of $K \cap I_{\bar \jmath}$ with the (possibly empty) family
of those pseudonull subsets $I_j$ of $H$ which lie
on the right or on the left of $I_{\bar \jmath}$
(according to whether or not there is some $h \in H$ 
such that $h < k$, for every $k \in K$).

The case in which $K$ is pseudonull as given by \ref{full}(ii)
is symmetrical, hence we can suppose that 
both $H$ and $K$ are pseudonull as given by (iii).
Then $H \cap K$ has a minimal element $i_1$
and a maximal element $i_2$ with $s _{i_1}=a $ and
$s _{i_2}=b $, since, without loss of generality, we may
assume that $H \cap K \neq \emptyset  $.
Indeed, suppose that, say, some $h \in H$ is 
$<$ than all the elements of $K$. Then the minimal 
element of $H \cap K$ is the minimal element of $K$ 
and the maximal element of   $H \cap K$ is the maximal element of $H$.
 Now, since both $H$ and $K$ are pseudonull as witnessed by (iii),
if we remove their maximal and minimal elements,
we still get pseudonull convex sets $H'$ and $K'$.
By the inductive hypothesis,
$H' \cap K'$ is pseudonull. 
But then $H \cap K$ is obtained from $H' \cap K'$ by adding  $i_1$
at the beginning and  $i_2$ at the end, so that 
$H \cap K$ is pseudonull by (iii), since 
$s _{i_1}=a $ and
$s _{i_2}=b $.

(e) First, assume that \emph{(e1) $K \subseteq H$
and, moreover, $K$ is either an initial segment
or a final segment of $H$}.
As remarked at the end of Definition \ref{fullred}, $ K$ being pseudonull
 means exactly that $K$  has an empty sequence as a
reduction. Since $H \setminus K$ is the complement 
of $ K$ in $H$, 
$H \setminus K $
has an empty sequence as a
reduction  by (a), and, as above,  this means
that $H \setminus K$ is pseudonull. 

Now consider \emph{(e2) the case when $H$ and $K$ 
are $ \subseteq $-incomparable}.  The case when 
$H \cap K = \emptyset $ is straightforward. Otherwise, 
 $H \setminus K = H \setminus (H \cap K)$
and $H \cap K$ pseudonull, by (d).  
Moreover,  $H \cap K$ is either an initial segment
or a final segment of $H$, since $H$ and $K$ are convex.
By (e1) above, 
$H \setminus K = H \setminus (H \cap K)$ is pseudonull.

Up to symmetry, it remains only to treat the case when \emph{(e3) 
$K \subseteq H$
but $K$ is neither an initial nor a final segment of $H$},
namely, $K$ lies ``strictly in the middle''.
We will prove this case by induction on the degree of $H$.
Note that in this case $H \setminus K$ is not convex, unless
$K$ is empty. 

As usual, the base case when $H$ is empty is straightforward.
Next, assume that  $H$ is given by (ii) in Definition \ref{full}, so
$H$ is the union of pseudonull pairwise disjoint 
convex subsets $I_j$ ($j \in J_H$). 
Then $K$, being convex, contains 
a (possibly empty) subfamily of the $I_j$, say, 
$K$ contains each $I_j$ for $j \in J^*$, with $J^* \subseteq J_H$.
Moreover, $K$ may possibly intersect  some
$I_{j_1}$ and some $I_{j_2}$, without containing them.
Then $H \setminus K$ is the union of the $I_j$ 
($j \in J_H \setminus J^*$, $j \neq j_1$, $j \neq j_2$)
plus possibly $I_{j_1} \setminus K$ and $I_{j_2} \setminus K$.
By the assumptions in (ii), each $I_j$ is pseudonull. 
By the already proved case (e2), 
if $K \not \subseteq I_{j_1}$ and  $K \not \subseteq I_{j_2}$, then 
both 
$I_{j_1} \setminus K$ and $I_{j_2} \setminus K$
are pseudonull, as well, hence (ii) applies 
to $H \setminus K$, which therefore is pseudonull. 
So far, we have not used the inductive hypothesis.

An ``exceptional case'' remains to be treated: when there is some
$ \bar{\jmath}  \in J_H$ such that $K  \subseteq I_{ \bar{\jmath} }$.
If such a $ \bar{\jmath} $ exists, it is unique,
since the $I_j$ are pairwise disjoint. 
In this case $I_{ \bar{\jmath} }$ has degree
less than the degree of $H$, thus by the inductive assumption
(with $I_{ \bar{\jmath} }$ in place of $H$) we get that
$I_{ \bar{\jmath} } \setminus K$ is  pseudonull.
Now $H \setminus K$ is pseudonull by (ii),
being the union of $I_{ \bar{\jmath} } \setminus K$ 
plus the remaining $I_j$s ($j \neq \bar{\jmath}$).
Indeed, the sets in the above family are  pairwise disjoint and, within  
$H \setminus K$, are convex.
   
If $H$ is given by \ref{full}(iii), then $K$ is contained 
in $H^* = H \setminus \{ i_1,  i_2 \} $, since in the
present case (e3) we are assuming that $K$ lies strictly in the middle of $H$.
By the assumptions in (iii) $H^*$ is pseudonull and, 
by the inductive definition of the degree, $H^*$
has degree strictly less than the degree of $H$. 
By the inductive hypothesis, $H^* \setminus K$
is pseudonull. But
$H \setminus K$ is obtained from 
$H^* \setminus K$ by adding one occurrence
of $a$ to the left and an occurrence of $b$
to the right, hence (iii) applies witnessing that 
$H \setminus K$  is pseudonull.

(f) Suppose that $\sigma$ has a reduction of the form
$b^na^m$, namely,
there is a family $( K_\ell) _{ \ell \in L} $
of pairwise disjoint convex pseudonull subsets of $I$ 
such that  
$I \setminus \bigcup  _{ \ell \in L}  K_\ell$
 is finite and is the set of indexes for a sequence
of type  $b^na^m$. Without loss of generality, we may assume that
 the family $( K_\ell) _{ \ell \in L} $ is of the form
given in (b) (note that the construction in the proof of (b) does not
modify $ \bigcup  _{ \ell \in L}  K_\ell $).
We will show that 

\emph{(f*) for any other (not necessarily regularizing)
 reduction $(I_j) _{ j \in J} $  of $\sigma$,
$\bigcup  _{ j \in J}  I_j \subseteq \bigcup  _{ \ell \in L}  K_\ell $,
in words, those indexes providing the realization
of  $b^na^m$ do not belong to the $I_j$.}  

Repeating the argument with  the two reductions
exchanged, we get that, if both reductions are regularizing,
 the two reductions are equal.
Actually, we get slightly more: those indexes giving
$b^na^m$ are the same.

So let us prove (f*). Suppose by contradiction that $(I_j) _{ j \in J} $ 
provides another reduction such that
some occurrence of $b$ or  $a$ 
with indexes not in $ \bigcup  _{ \ell \in L}  K_\ell $
has index in some $I_{ \bar{\jmath} }$.
Recall that we have assumed that 
$( K_\ell) _{ \ell \in L} $ has the form
given in (b). In the most involved case,
$I_{ \bar{\jmath} }$ intersects
some $K_t$ without intersecting the preceding
$R_{t-1}$. Then  $I_{ \bar{\jmath} }$, being convex, 
contains $R_{t}$, contains $K_{t+1}$, \dots, until
$I_{ \bar{\jmath} }$ intersects some 
$K_\ell$ without intersecting the following
$R_{\ell}$.
Applying (e) two times, 
$I^* =I_{ \bar{\jmath} } \setminus (K_{t} \cup K_{\ell})$
is pseudonull. Note that $K_{t}$
 lies ``on the left'' of   $I_{ \bar{\jmath} }$, hence
$I_{ \bar{\jmath} } \setminus K_{t}$ is convex.
Arguing in the same way ``on the right'',
we get that $I^*$ is convex.   
Now, $I^*$ contains the finite $R_{t}$ ``just at the beginning'', hence
$I^*$ has a minimal element $i_{1}$. 
In the most general case, $( s_i) _{i  \in  R_{t}}$ 
has the form $b^{m_t}$, with $m_t > 0$,  in particular, 
$s _{i_{1}} =b $, but this contradicts (a1),
since we have proved  that $I^*$ is pseudonull.
Thus  $( s_i) _{i  \in  R_{t}}$ 
is $a^{n_t}$, but then, due to the general
form given in (b), also $R_{\ell-1}$ has the form
$a^{n_{\ell-1}}$, thus, if  $i_{2}$ is the maximal
element of $I^*$, $s _{i_{2}} =a $, 
and  this again contradicts (a1).

So far, we have considered the most involved case
in which $I_{ \bar{\jmath} }$ intersects
some $K_t$ ``at the beginning''. 
On the other hand, it might happen 
that the first elements of $I_{ \bar{\jmath} }$ 
come from some $R_t$. If this is the case, just argue as above
by considering   $I^* =I_{ \bar{\jmath} } \setminus K_{\ell}$
or, if also the last elements of 
$I_{ \bar{\jmath} }$ 
come from some $R_ \ell$,
simply consider $I^* =I$. In any case, by assumption,
at least one occurrence of $b$ or  $a$ 
with indexes not in $ \bigcup  _{ \ell \in L}  K_\ell $
has index in  $I^*$.
By construction, and the general form given by (b),
if we get more than one $a$ or $b$,
we have that all the $a$ follow all the $b$.
In any case,  $I^*$ has a minimal index carrying
a $b$, or a  maximal index carrying
an $a$, in each case contradicting (a1).
 We have got a contradiction in each case,
so (f*), hence also (f) are proved.

(g1) is immediate from (f)
and the last sentence in Definition \ref{fullred}. 

(g2) Suppose that $\sigma^*$ is a pseudonull
reduction of $\sigma$, as given by the reducing family
$( K_\ell) _{ \ell \in L} $, with each 
 $K_\ell \subseteq I$, so that the index set of 
$\sigma^*$ is $I^* =
I \setminus \bigcup  _{ \ell \in L}  K_\ell$. 

 We will prove
(g2) by induction on the degree of $\sigma^*$.
If $\sigma^*$ is empty, then 
$I = \bigcup  _{ \ell \in L}  K_\ell$, so that 
 $\sigma$ is pseudonull 
by (ii) in Definition \ref{full}, as  witnessed by the family
$( K_\ell) _{ \ell \in L} $ itself.

Suppose that $\sigma^*$ is pseudonull as witnessed by 
some family $(I_j) _{ j \in J} $ 
as in (ii) in Definition \ref{full}.
For every $j \in J$, let $L_j$
be the set of those $\ell$ such that there are 
$i < i' \in I_j$ with  $i <  K_\ell  <  i' $, namely,
$i <  k <  i' $ (in $I$), for every $k \in K$.
Then, for every $j \in J$, 
 $( K_\ell) _{ \ell \in L_j} $ is a reduction
of  $ I_j^\diamondsuit =  I_j \cup  \bigcup  _{ \ell \in L_j}  K_\ell$,
whose reduced set of indexes is $I_j$.
By the assumptions, $I_j$ is pseudonull of degree 
less than the degree of $I$, so that, by the inductive hypothesis,
$I_j^\diamondsuit $ is pseudonull (note that, by construction,
$I_j^\diamondsuit $ is convex as a subset of $I$).    
It is not necessarily the case that the union of 
the $I_j^\diamondsuit $s is the whole of $I$,
since there might be certain $\ell$s which belong to no 
$L_j$. However, it is enough to add the corresponding 
$ K_\ell$s in order to get a representation of $I$  
as in (ii) in Definition \ref{full}.
In more detail, if $L^\circ =  L \setminus 
\bigcup  _{ j \in J}  L_j$, then
the family consisting of the  $I_j^\diamondsuit $ ($j \in J$)
together with the $ K_\ell$ ($\ell \in L^\circ$)
witnesses that $I$ (that is, $\sigma$) is pseudonull by (ii).

Finally, suppose  that $\sigma^*$ is pseudonull as witnessed by 
(iii) in Definition \ref{full}, thus $I^*$ has a minimal element
$i_1$, a maximal  element
$i_2$ and $I^{\bullet}= I^* \setminus \{ \, i_1, i_2 \, \} $ is pseudonull.
Let $L^* = \{ \, \ell \in L \mid  i_1 <  K_\ell < i_2 \, \} $.
The family  $( K_\ell) _{ \ell \in L^*} $ is a reduction
of  $ I^\diamondsuit =  I^{\bullet} \cup  \bigcup  _{ \ell \in L^*}  K_\ell$,
whose reduced set of indexes is $I^{\bullet}$. By the
inductive assumption, $I^\diamondsuit$ is pseudonull. 
Then, by item (iii) in Definition \ref{full}, 
$I^{\heartsuit}=
I^\diamondsuit \cup \{ \, i_1, i_2 \, \}$ is pseudonull.
Now note that, since each $ K_\ell$ is convex, 
if $ \ell \notin L^*$, then in $I$ either  $ K_\ell < i_1$,
or $ K_\ell > i_2$.
This implies that $I=I^{\heartsuit}
\cup \bigcup  _{ \ell \notin L^*}  K_\ell$,
which is pseudonull by (ii) in Definition \ref{full}.

\begin{gather*} 
\xymatrix{
\cdots  \ar@{-}`u[r] `[rrrrrrrr]^{I} [rrrrrrrr]
 \ar@{-}[r]  &
  \cdots  \ar@{-}[r]  &
 \ar@{-}@<-6pt> `d[r] `[rrrr]_{I^{\heartsuit}} [rrrr]
 \ar@{-} @<+6pt>`d[r] `[rrrr]_{I^{\diamondsuit}} [rrrr]
 \ \overset{ i_1}{\cdot}\  
\ar@{-}[r]  & \cdots  \ar@{~}[r] & \ar@{-}[r] & \cdots \ar@{~}[r] &
 \ \overset{ i_2}{\cdot}\ 
 \ar@{-}[r]  & \cdots  \ar@{-}[r]  & \cdots}
\\
\textit{the $K_\ell$ represented by solid lines, the curled lines representing elements
of $I^{\bullet}$} 
 \end{gather*}

(h) Suppose that the reducing family giving $\sigma^*$ from $\sigma$ 
is $(I_j) _{ j \in J} $.

First assume that $\sigma$ is regular, with regularizing family
$( K_\ell) _{ \ell \in L} $, giving the reduced sequence
$\sigma^r$.  For $j \in J$ and $\ell \in L$, let  
$I_{j,\ell} = I_j \cap K_\ell $.
By (d)
 each $I_{j,\ell}$ is pseudonull.
By (f*) proved above,
$\bigcup  _{ j \in J}  I_j \subseteq \bigcup  _{ \ell \in L}  K_\ell $,
so that the family $(I_{j,\ell}) _{ j \in J,\ell \in L} $
is still a reducing family giving $\sigma^*$.
By (g), for every $\ell \in L$, the set 
 $ K^\diamondsuit  _\ell  =K_\ell \setminus \bigcup  _{ j \in J}  I_{j,\ell}$ 
is pseudonull, so that  $(K^\diamondsuit  _\ell) _{\ell \in L}  $
is a reducing family for $\sigma^*$.
The index set of   $\sigma^*$ is $I \setminus \bigcup  _{ j \in J}  I_j$,
 the index set of  the regular reduction $\sigma^r$ of $\sigma$ 
 is $I \setminus \bigcup  _{ \ell \in L}  K_\ell$,
 so that  $(K^\diamondsuit  _\ell) _{\ell \in L}  $
reduces $\sigma^*$ exactly to $\sigma^r$.
 
Conversely, suppose that $\sigma^*$ is regular, with regularizing family
$( K^*_\ell) _{ \ell \in L} $, giving the reduced sequence
$\sigma^r$.  
For each $\ell \in L$, let $J_\ell = 
\{ \, j \in J \mid \ell_1 < I_j <\ell_2 ,
\text{ for some } \ell_1, \ell_2 \in K^*_\ell \,\}$ and let    
 $ K  _\ell  =K^*_\ell \cup \bigcup  _{ j \in J_\ell}  I_{j,\ell}$.
Note that, by construction, $K^*_\ell \cap I_j = \emptyset  $,
for every  $\ell \in L$ and $j \in J$. Hence, for every $\ell \in L$,
$I_{j,\ell}$ ($j \in J_\ell$) is a reducing family for $K  _\ell$.
Since  each $K^*_\ell$ is pseudonull, each $K_\ell$ is pseudonull by   (g2).
Then the family $ \mathcal F$  consisting of the $K_{\ell}$ ($j \in J_\ell$)
together with the $I_j$ ($j \notin \bigcup  _{\ell \in L}J_\ell$)
is a reduction of $I$. This family reduces $\sigma$ to $\sigma^r$,
since $\sigma^*$ is obtained from $\sigma$ by removing the indexes
in $\bigcup  _{ j \in J}  I_{j}$ and then $\sigma^r$ is obtained from
$\sigma^*$  by removing the indexes
in $\bigcup  _{\ell \in L} K^*_\ell$.
Indeed, each $I_j$ is removed in the reduction given by
$ \mathcal F$, since, for every $j \in J$, either $I_j$ is contained  
in some $K_\ell$ (if $i \in J_\ell$), or, otherwise,  $I_j$ directly belongs
to $ \mathcal F$.
\end{proof}

\section{Proof of Theorem \ref{abe}} \labbel{proofabe} 

We now have all the tools at our disposal
for a proof of Theorem \ref{abe}.

We define the domain
$S$ of our desired complete semigroup 
as the set of all the finite strings of the form 
$b^na^m$, with $n,m \geq 0$, plus an additional element $ \Omega$.  
We denote the empty string, that is, the case $n=m=0$ by $e$. 
If $(\sigma_z) _{z \in Z } $ is a sequence of elements of $S$
and $Z$ is a linearly ordered set,
let us define $\prod _{z \in Z}  \sigma _z $ in the following way.

(*) If some $\sigma_z$ is $ \Omega$, we let  
$\prod _{z \in Z}  \sigma _z  = \Omega $.

(**) Otherwise, consider the concatenation
$ \sigma  = \sum _{ z \in Z }  \sigma _z$ of the strings $\sigma_z$
modulo the linearly ordered set $Z$.
We recall the definition  in this particular case.
Suppose that, for every $z \in Z$, $\sigma_z$ has length $n_z$
and is indexed  by the set $\{ \,  0, 1, \dots, n_z-1 \,\}$.
The index set of $\sigma$ is $I= \{ \, (z, \ell ) \mid z \in Z,
\ell < n_z \, \} $, ordered lexicographically.
The element with index $(z, \ell )$ in $\sigma$ 
is defined to be the element with index $\ell $
in $\sigma_z$. 
Having defined $\sigma$, 
we let 
$\prod _{z \in Z}  \sigma _z  = b^na^m $
if $\sigma$ is regular with reduction $b^na^m $, 
in the sense of Definition \ref{fullred}
(this is a good definition in view of Lemma \ref{cl}(f)). 
If $\sigma$ is not regular, let
$\prod _{z \in Z}  \sigma _z  = \Omega $.
 
Having defined $S$, we have to prove Conditions (U) and (N)
from Definition \ref{pnc}.
Condition (U) is elementary, since if  $ \vert   Z  \vert   =1$,
then $\sigma$ is (up to a reindexing) $\sigma_z$
for the unique $z \in Z$; moreover, a sequence of the form 
$b^na^m$ cannot be further
 reduced.

It remains to check that Condition (N) is satisfied.
So let $\sigma$ and $Z$ be as in (**) above and let
$\pi:Z \to J$ be a surjective order preserving map. 
Condition (N) is straightforward if some $\sigma_z$
is $ \Omega$, so let us assume that no $\sigma_z$ is $ \Omega$.
We have to evaluate
$ \prod _{j\in J} \prod _{\pi(z) =j }  \sigma _z$. 
For $j \in J$, let $Z_j = \{ \, z \in Z \mid \pi (z) = j \, \} $,
thus each $Z_j$ is a convex subset of $Z$.
If, for some $j \in J$,   $\prod _{\pi(z) =j }  \sigma _z = \Omega $,
then, by definition,  (the sequence associated to) $Z_j$ is not
regular (recall the convention in Definition \ref{fullsubs}).
By Lemma \ref{cl}(c) in contrapositive form, if 
some $Z_j$ is not regular, then $Z$ (that is, $\sigma$) is not regular,
hence this special instance of (N) holds.

So let us assume that, for every $j \in J$, $Z_j$ is
regular. In particular, each $Z_j$ 
has a reducing family $W_{j,h}$ ($h \in H_j$),
for some index set $H_j$ and subsets   $W_{j,h}$
of $Z_j$, inducing some finite reduced
sequence $\sigma^{*,j}$. In particular,
$ \prod _{j\in J} \prod _{\pi(z) =j }  \sigma _z =
\prod _{j\in J} \sigma^{*,j}$. 
Since each  $Z_j$ is a convex subset of $Z$,
the family  $W_{j,h}$ ($h \in \bigcup _{j \in J} H_j$)
is a reducing family for $Z$, which induces a sequence
$\sigma^*$. The sequence $\sigma^*$ turns out to be the 
string concatenation of the $\sigma^{*,j}$, thus 
$\prod _{j\in J} \sigma^{*,j}$ is the reduction of $\sigma^*$,
if $ \sigma ^*$ is reducible, and $ \Omega$ otherwise.   
We are in the situation dealt with in 
Lemma \ref{cl}(h), so that $\sigma$ is regular if and only if
$\sigma^*$ is regular and, in this case, the regularizing reductions 
are the same. This means that 
$\prod _{z \in Z}  \sigma _z $ and  
$ \prod _{j\in J} \prod _{\pi(z) =j }  \sigma _z =
\prod _{j\in J} \sigma^{*,j}$ give the same outcome. 

The proof of Theorem \ref{abe} is thus complete.


\section{Appendix I. A simpler proof
for a special case of Theorem \ref{abe}}
 \labbel{appabe}

In this appendix we present full details
for the construction of a
${\leq} \omega $-semigroup with  elements $a,b,e$
satisfying the conclusion of Theorem \ref{abe},
as sketched in the example in  Section \ref{XyX}.

We first need an analysis
of finite strings containing only the symbols $a,b$, with the intended idea that
the juxtaposition $ab$ reduces to the identity.
We let a \emph{pseudonull} string be a 
finite string $ \upsilon $ such that 

(p1) $\upsilon$  contains
an equal number of occurrences of $a$ and $b$ and, moreover, 

(p2) for
 any initial substring $\eta$ of $\upsilon$, the number of
occurrences of $a$ in  $\eta$ is $\geq$ than the 
 number of
occurrences of $b$ in  $\eta$.

\noindent Thus, for example,  $aaababbabb$ is a pseudonull string, 
but $baab$ is not pseudonull, since (p2)
is not satisfied. 
From the point of view of (finitary noncommutative) semigroups,
pseudonull strings are strings which can be reduced to $e$
under the assumption $ab=e$, with $e$ a neutral element.

Now consider a finite or $ \omega$-indexed  string $\sigma$  
with only the symbols $a $ and $ b$. The \emph{reduced form}  of 
$\sigma$ is the (possibly empty) string obtained from $\sigma$ by removing  
all the pseudonull substrings (as usual, by a \emph{substring} 
we always mean a convex substring, not simply a subsequence).
 Note that the reduced form of $\sigma$ is a subsequence of $\sigma$,
but not necessarily a substring. Also note that in a reduction 
it might happen that infinitely many elements of the string
are removed, for example, if $\sigma=bba
\overline{aabb} \overline{aabababb} \overline{ab}
 aa \overline{ab} \hspace{1.5 pt} \overline{ab}  \hspace{1.5 pt} \overline{ab} \dots$,
then the reduced form of $\sigma$ is $bbaaa$. 

It is intuitively clear that no occurrence of the substring $ab$
can appear in the reduced
form of $\sigma$, but a proof is needed, since, in principle,
pseudonull substrings could ``amalgamate'' in unexpected ways.
To check the above statement, and also for later use, we will prove 
the following  properties of pseudonull strings.

\smallskip 

($\alpha $) A finite concatenation of pseudonull strings is pseudonull.
If $\nu$ is a pseudonull string, then the concatenation
$a \nu b$ is pseudonull. More generally, if $\sigma$ is a finite string,
$\upsilon$ is a pseudonull substring of $\sigma$ 
and the string  $\sigma \smallsetminus \upsilon  $, the
subsequence of $\sigma$  
obtained  by removing $\upsilon$, is pseudonull,
then $\sigma$ is pseudonull. 

($\beta$) A finite string $ \upsilon $ is pseudonull
if and only if  (p1) holds and, moreover, 

\ \ \ \ \ \ \ \ (p3) for
 any final substring $\eta^*$ of $\upsilon$, the number of
occurrences of $b$ in  $\eta^*$ is $\geq$ than the 
 number of
occurrences of $a$ in  $\eta^*$.

 ($\gamma$) A finite string $ \upsilon $ is pseudonull
if and only if  both (p2) and (p3) above  hold.

($\delta$) If $\upsilon_1,   \upsilon _2$ are pseudonull substrings of $\sigma$, 
then the intersection of  $\upsilon_1 $ and $    \upsilon _2$  is a pseudonull
substring of $\sigma$.

 ($\varepsilon$) If $\upsilon_1,   \upsilon _2$ are pseudonull substrings
 of $\sigma$, 
then both  $\upsilon_1 \smallsetminus   \upsilon _2$  and 
$\upsilon_2 \smallsetminus    \upsilon _1$ are pseudonull strings.

($\zeta$)  If $\sigma$ is a finite union of pseudonull strings,
then $\sigma$ is pseudonull.

($\eta$) If $\sigma$ is a finite or $ \omega$-indexed  string   
with only the symbols $a $ and $ b$, then the reduced form of 
$\sigma$ contains no occurrence of the substring $ab$. In particular,

($\theta$)  if $\sigma$ is reduced, then
$\sigma$ is either an infinite string of $b$s, or has the form
  $b^na^ \alpha $, with $n$ 
a nonnegative integer and $\alpha \leq \omega $,
both $n$ and $ \alpha $ being  
  possibly $0$.

($\iota$)  If $\sigma$ is a finite or $ \omega$-indexed  string,
 the  reduced form of $\sigma$ can be obtained from $\sigma$ by removing
a family of pairwise disjoint pseudonull substrings.

($\kappa$) If $\sigma$ is a finite or $ \omega$-indexed  string and
$\sigma^*$ is obtained from $\sigma$ by removing some
(not necessarily all) pseudonull substrings, then
the reduced forms of $\sigma$ and of $\sigma^*$ are equal.  

\smallskip 

Item ($\alpha$) is straightforward.
To prove ($\beta$), note that any final substring 
of $\upsilon$ is the complement of some
 initial substring of $\upsilon$,
so that, assuming (p1), the conditions (p2) and (p3)
are equivalent.
Then ($\gamma$) follows, since, when applied to the improper substring,
 (p2) and (p3) together imply (p1). 

Item ($\delta$) is straightforward if the two strings
have empty intersection, or if
$\upsilon_1 $ is contained in $    \upsilon _2$,
or conversely. Otherwise,
 suppose, say, that the first element of 
$\upsilon_1 $ precedes the first element of $    \upsilon _2$.
Then their intersection $\upsilon$ is 
an initial segment of $   \upsilon _2$, hence 
$\upsilon$ satisfies (p2), since $   \upsilon _2$ 
satisfies (p2).
Symmetrically, 
$\upsilon$ is 
a final segment of $   \upsilon _1$, hence 
$\upsilon$ satisfies (p3).
The conclusion follows from ($\gamma$).

Item ($\varepsilon$) then follows, since 
$\upsilon_1 \smallsetminus   \upsilon _2 $ is equal to 
$\upsilon_1 \smallsetminus   \upsilon $, where, as above,  $\upsilon$
is the intersection 
of $\upsilon_1 $ and  $    \upsilon _2$. Indeed,
the sequence $\upsilon_1 \smallsetminus   \upsilon $ satisfies (p1) since both
$\upsilon_1 $ and $   \upsilon $
satisfy (p1). If the first element of 
$\upsilon_1 $ precedes the first element of $    \upsilon _2$,
then $\upsilon_1 \smallsetminus   \upsilon $
satisfies (p2). All the other cases are symmetrical,
possibly using (p3) and ($\beta$). 

 Note that if, say,
$\upsilon_2 $ is contained in  $    \upsilon _1$,
then $\upsilon_1 \smallsetminus   \upsilon _2$ is a subsequence 
of $\sigma$, but not 
necessarily a substring of $\sigma$.
On the other hand, if $\upsilon_1 $ and  $    \upsilon _2$
are substrings of $\sigma$, then their intersection is still
a (possibly empty) substring of $\sigma$.

Item ($\zeta$) is proved by iterating ($\delta$) and ($\varepsilon$).
We get that $\sigma$ is the disjoint union of pseudonull
substrings, so that $\sigma$ is pseudonull by the first
statement in ($\alpha$).  
Note that, since $\sigma$ is supposed to be a \emph{finite}
union of substrings, it is no loss of generality to assume that
such substrings are pairwise containment-incomparable
(since otherwise we can remove any string contained in some
larger string, still getting the same union). 
The difference of two containment-incomparable substrings
is either empty, or is itself a substring, hence we can actually
always work with substrings. 

To prove ($\eta$), suppose by contradiction that $ab$ is a substring of
the reduced form of $\sigma$. Then  $a \nu b$ is a substring of
$\sigma$, where $\nu$ is a  union of pseudonull strings.
Note that $\nu$ is necessarily finite, since $\sigma$ is either finite,
or $ \omega$-indexed, hence the occurrence of 
$b$ has finite index. By ($\zeta$), $\nu$ is pseudonull.
But then also $a \nu b$ would be pseudonull, by ($\alpha$).
Thus the corresponding occurrences of $a$ and $b$
are eliminated in the reduction, a contradiction.
Item ($\theta$) then follows immediately. 

If $\sigma$ is finite, item ($\iota$) follows from
($\delta$) and ($\varepsilon$), as in the proof of ($\zeta$).
Otherwise, let us say that some  sequence $( \nu_p) _{ p \in P} $  of
pseudonull substrings of $\sigma$ is a \emph{reducing sequence} if  
$\sigma \smallsetminus  \bigcup_{ p \in P}  \nu_p  $ is the reduced form
of $\sigma$. Given such a reducing sequence, we will construct 
inductively another reducing  sequence $( \xi _n ) _{ n \in Q } $
of pairwise disjoint pseudonull strings,
with either $Q$ finite or $Q= \omega $. 
Obviously, if $\sigma$ is already reduced, we need just
 consider an empty family.
Otherwise, consider the first element $x_{i_1}$ of
$\sigma$ which does not appear in the reduced form.
By the definition of a reduced form, 
$x_{i_1}$ appears in some $\nu_{p_1}$ with $p_1 \in P$.
Let $ \nu^1_{p_1} =  \nu_{p_1} $ and, 
for $p \in P \setminus \{ p_1 \} $,
 let  $ \nu^1_p =  \nu _p \smallsetminus \nu_{p_1}$.
Since  $x_{i_1}$ is the first element removed in the reduction
process, for every $p \in P$,  $x_{i_1}$ is 
either the first element of $\nu_{p}$, or
 $x_{i_1}$ precedes every element of 
$\nu_p$. Since $x_{i_1}$ is the first element of $\nu_{p_1}$,
 we get  that, for each $p \in P \setminus \{ p_1 \} $,
$ \nu^1_p =  \nu _p \smallsetminus \nu_{p_1}$ is 
a (possibly empty) substring of 
$\sigma$. Moreover, each $\nu^1_p$ is pseudonull by ($\varepsilon$).
It follows that $( \nu^1_p) _{ p \in P} $ is still
a reducing sequence and, moreover, $ \nu^1_{p_1}$
is disjoint with the remaining strings $\nu^1_p$ (for $  p \in P 
\setminus \{ p_1 \}$). 

Now consider the first element $x_{i_2}$ which
 is removed in the reduction
but does not belong to $ \nu^1_{p_1}$
(if there is no such element, the one element sequence consisting
only of $ \nu^1_{p_1}$ is already reducing).
Since $x_{i_2}$ is removed, $x_{i_2}$ appears in some $\nu^1_{p_2}$,
for $p \in P \smallsetminus \{ p_1 \} $.
Letting $ \nu^2_{p_h} =  \nu^1_{p_h} $, for $h=1,2$  and
  $ \nu^2_p =  \nu^1 _p \smallsetminus \nu^1_{p_2}$,
for $p \in P \setminus \{ p_1, p_2 \} $,
arguing as above, we get that $( \nu^2_p) _{ p \in P} $ is still
a reducing sequence, $ \nu^2_{p_1}$ and $ \nu^2_{p_2}$
are pairwise disjoint and also
both disjoint with the remaining strings.
Going on in the same way, we get a possibly infinite
sequence $ \nu^2_{p_1},  \nu^2_{p_2},  \nu^3_{p_3}, \dots $
which is reducing and whose elements are pairwise disjoint.

Finally, we prove ($\kappa$).
By ($\iota$), $\sigma$ has a finite or 
countable reducing family  
$( \xi _n ) _{ n \in Q } $
consisting of pairwise disjoint pseudonull substrings.
Let $( \nu_p) _{ p \in P} $ be the set of pseudonull substrings 
which are removed from $\sigma$ in order to obtain $\sigma^*$.
By the definitions of reduced form and of a 
reducing family, each $\nu_p$ is contained 
in  $\bigcup_{ n \in Q}  \xi_n$.
Without loss of generality, we can also assume
that   each $\nu_p$ is contained in some 
$ \xi_n$, since, if this is not the case, we can consider
the family  $( \nu_p \cap \xi_n) _{ p \in P, n \in Q} $
 in place of $( \nu_p) _{ p \in P} $.
Note that the intersection of two pseudonull 
substrings of $\sigma$ is still a pseudonull substring 
of $\sigma$, by ($ \delta $).

Now observe that each $ \xi_n$ is finite, being a pseudonull string,
hence each $ \xi_n$ contains a finite number 
of $\nu_p$ (without loss of generality we can discard repeated 
occurrences). Fix some $n \in Q$. By a finite iteration of  ($\varepsilon$),
if we remove from  $ \xi_n$ all the corresponding 
$\nu_p$, we get a pseudonull string $ \xi^*_n$.
This is not necessarily a substring of $\sigma$,
but it is indeed  a substring of $\sigma^*$,
since the $\nu_p$ do not belong $\sigma^*$,
But then $ \xi^*_n$ is removed, when reducing
$\sigma^*$. Since the above argument applies
to each $n \in Q$, we get that  the reduced forms
of $\sigma$ and of $\sigma^*$ coincide.

\smallskip

Having proved ($\alpha$) - ($\kappa$), we are now ready to define
a ${\leq} \omega $-semigroup satisfying the desired properties.

The domain
$S$ is the set of all the finite strings of the form 
$b^na^m$, with $n,m \geq 0$, plus an additional element $ \Omega$.  
We denote the empty string, that is, the case $n=m=0$ by $e$. 
If $(\sigma_i) _{i \in I } $ is a sequence of elements of $S$
with either $I= \omega $, or $I$ a nonnegative integer,
let us compute $\prod _{i \in I}  \sigma _i $ in the following way.

If some $\sigma_i$ is $ \Omega$, we let  $\prod _{i \in I}  \sigma _i  = \Omega $.

Otherwise, consider the concatenation
$ \sigma  = \ast _{ i \in I }  \sigma _i$ of the strings $\sigma_i$,
in their order, and let $\tau$ be the reduced form of $\sigma$, as defined above.
If $\tau$  has the form $b^na^m$, with $ n,m < \omega$  let
$\prod _{i \in I}  \sigma _i  = b^na^m$, otherwise, let 
$\prod _{i \in I}  \sigma _i  = \Omega $.
The above definition includes the case $\prod_ \emptyset = e
=b^0a^0$. 
 
Condition (U) is elementary, since any string $b^na^m$ is  
already in reduced form.

It remains to check that Condition (N) is satisfied.
Condition (N) is straightforward if some $\sigma_i$
is $ \Omega$, so let us assume that no $\sigma_i$ is $ \Omega$.
For $I$ as above, we have to evaluate
$ \prod _{j\in J} \prod _{\pi(i) =j }  \sigma _i$, for 
$\pi:I \to J$  a surjective order preserving map.
For each $j \in J$, let $\ast _{\pi(i) =j }  \sigma _i$ denote the
concatenation of the finite or countable set of adjacent
strings such that $\pi(i) =j$. In particular,
$\ast _{\pi(i) =j }  \sigma _i$ is a  substring of $\ast _{ i \in I }  \sigma _i$.
Then, for every $j \in J$, $\prod _{\pi(i) =j }  \sigma _i$ is obtained by reducing  
 $\ast _{\pi(i) =j }  \sigma _i$. 

First, assume that, for every $j \in J$, $\prod _{\pi(i) =j }  \sigma _i
\neq \Omega $ and let $\sigma^*_j = \prod _{\pi(i) =j }  \sigma _i$.
Then  $ \prod _{j\in J} \prod _{\pi(i) =j }  \sigma _i$
is evaluated considering the reduction of $\sigma^* = 
\ast _{j \in J }  \sigma ^*_j$.
We are exactly in the situation from ($\kappa$), hence
the reduced forms of $\sigma$ and of $\sigma^*$ are equal. 
This means that   
$\prod _{i \in I}  \sigma _i  =\prod _{j\in J} \prod _{\pi(i) =j }  \sigma _i$,
with possible value $ \Omega$. 

It remains to treat the case when $\prod _{\pi(i) =j }  \sigma _i
= \Omega $, for some  $\bar {\jmath} \in J$. Because of ($\theta$),
this may happen only when $\{ \, i \in I  \mid \pi(i) = \bar{\jmath}  \,\}$
is infinite, thus $I= \omega $,   $\{ \, i \in I  \mid \pi(i) = \bar{\jmath}  \,\}
 = [n, \omega )$,
for some $n < \omega $, and the reduced form of 
  $\ast _{\pi(i) = \bar{\jmath}  }  $ contains either infinitely many
consecutive $a$ or infinitely many 
consecutive  $b$. As above, letting $\sigma^*_j$ be the reduced
form of $\ast _{\pi(i) =j }  \sigma _i$, for every $j \in J$, and letting   
$\sigma^* = \ast _{j \in J }  \sigma ^*_j$, by ($\kappa$) 
the reduced forms of $\sigma$ and of $\sigma^*$ are equal.
As mentioned above, $\sigma^*$ has an infinite tail consisting
of the same repeating element.
Since $\sigma^*_{ \bar{\jmath} }$ is already reduced,
 since every pseudonull string is finite and also
$\{ \, i \in I  \mid \pi(i) < \bar{\jmath}  \,\}$ is finite,
only a finite number of elements of $\sigma^*$
are removed in the reduction. Since
the reduced forms of $\sigma$ and of $\sigma^*$ are equal,
also the reduced form of $\sigma$
has an infinite tail consisting
of the same repeating element, and this implies that
$\prod _{i \in I}  \sigma _i = \Omega  $, thus 
$\prod _{i \in I}  \sigma _i  =\prod _{j\in J} \prod _{\pi(i) =j }  \sigma _i$
in this final case, as well.

\end{document}